\numberwithin{equation}{section}
\newcommand{\RR}{\mathbb{R}}
\newtheorem{thm}{Theorem}[section]
\newtheorem{lem}[thm]{Lemma}
\newtheorem{cor}[thm]{Corollary}
\theoremstyle{remark}
\newtheorem{rem}{Remark}[section]
\title{On the number of pairwise touching cylinders in $\mathbb{R}^d$}
\author{
Jozsef Solymosi\thanks{
Department of Mathematics, University of British Columbia, Vancouver, Canada, and {O}buda University, Budapest, Hungary.
\texttt{solymosi@math.ubc.ca}}
\and
Joshua Zahl\thanks{
Chern Institute of Mathematics, Nankai University, Tianjin, China, and 
Department of Mathematics, University of British Columbia, Vancouver, Canada. \texttt{jzahl@nankai.edu.cn}
}}
\date{\today}
\begin{document}

\maketitle

\begin{abstract}
John E. Littlewood posted the question {\em ``Is it possible in 3-space for seven infinite circular cylinders of unit radius each to touch all the others? Seven is the number suggested by counting constants.''} Boz\'oki, Lee, and R\'onyai constructed a configuration of 7 mutually touching unit cylinders. The best-known upper bounds show that at most 10 unit cylinders in $\mathbb{R}^3$ can mutually touch. We consider this problem in higher dimensions, and obtain exponential (in $d$) upper bounds on the number of mutually touching cylinders in $\mathbb{R}^d$. Our method is fairly flexible, and it makes use of the fact that cylinder touching can be expressed as a combination of polynomial equalities and non-equalities.
\end{abstract}

\section{Introduction}
In Problem 7 in his problem book \cite{Littlewood1968}, John E. Littlewood posted the question {\em ``Is it possible in 3-space for seven infinite circular cylinders of unit radius each to touch all the others? Seven is the number suggested by counting constants.''} (see also in \cite{BrassMoserPach2005} with some related questions). Andr\'as Bezdek \cite{Bezdek2005} proved that in $\mathbb{R}^3$ the maximum number of pairwise touching cylinders (of arbitrary radii) is at most 24. In the case of cylinders of unit radius, this was subsequently improved to 18 by Koizumi \cite{Koizumi2025}, and then to 10 by Dillon, Koizumi, and Luo \cite{DillonKoizumiLuo2025}. In the other direction, 
Boz\'oki, Lee, and R\'onyai \cite{BozokiLeeRonyai2017} constructed an arrangement of 7 mutually touching unit cylinders. For non-equal radii, there are numerical examples for 9 mutually touching cylinders in \cite{AlphaEvolve}.

In this paper, we consider the problem of bounding the number of pairwise touching cylinders (of unit radius or of arbitrary radius) in $\RR^d$. The parameter space of $n$-tuples of lines in $\RR^d$ has dimension $(2d-2)n$, while the requirement that the corresponding unit cylinders touch imposes $\binom{n}{2}$ constraints. The image of a configuration of touching cylinders under a rigid transformation is also a configuration of touching cylinders. Thus by counting parameters, one might expect to find $n$ touching unit cylinders in $\RR^d$ provided $(2d-2)n \geq \binom{n}{2} + \binom{d+1}{2}$. In \cite{DillonKoizumiLuo2025}, Dillon, Koizumi, and Luo construct an arrangement of $2d-2$ pairwise touching unit cylinders in $\RR^d$. Prior to this work, it was not known if the maximum number of touching unit cylinders in $\RR^d$ for $d\geq 4$ is finite. Our main results are the following:

\begin{thm}\label{unitDistanceLines}
Let $C$ be a set of mutually touching unit cylinders in $\RR^d$. Then $|C|\leq 4d\cdot 7^{2d-3}$.
\end{thm}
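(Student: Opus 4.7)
The plan is to apply Milnor's bound on the number of connected components of a real algebraic hypersurface, combined with a chart decomposition of the Grassmannian of affine lines in $\RR^d$.

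First, I would parametrize lines in $\RR^d$ via $d$ standard affine charts of dimension $2d-2$: in the $i$-th chart, a line is represented by a direction vector whose $i$-th component is normalized to $1$ (yielding $d-1$ free parameters) together with its intersection with the hyperplane $\{x_i=0\}$ (another $d-1$ parameters). Every line has a direction with some coordinate of maximal absolute value, so it belongs to at least one chart; by pigeonhole, at least $|C|/d$ cylinders from $C$ have axes in a single chart, and it suffices to bound this count by $4\cdot 7^{2d-3}$.

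Second, pick any reference cylinder $\ell^* \in C$. For a variable line $\ell$ whose parameters lie in the chosen chart, the squared distance $d(\ell,\ell^*)^2$ is a rational function of the chart parameters, with denominator given by the squared Gram determinant of the two direction vectors. Clearing this denominator, the touching condition $d(\ell,\ell^*)^2 = 4$ becomes a polynomial equation of degree at most $4$ in the $2d-2$ chart parameters in the non-parallel regime. The parallel case is handled as a semi-algebraic sub-case via a polynomial equality together with the non-equality that the Gram determinant vanishes, which is exactly the use of polynomial equalities and non-equalities highlighted in the abstract. By Milnor's bound $D(2D-1)^{n-1}$ with $D=4$ and $n=2d-2$, the touching locus $V_{\ell^*}$ has at most $4\cdot 7^{2d-3}$ connected components in the chart.

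The main obstacle is the final step: showing that distinct cylinders in $C$ with axes in the same chart correspond to parameters in distinct connected components of $V_{\ell^*}$. The natural approach is a continuity/rigidity argument: if two cylinders $\ell_i,\ell_j\in C$ had parameters lying in the same connected component of $V_{\ell^*}$, then a continuous path of lines between them within $V_{\ell^*}$ would preserve tangency with $\ell^*$; combined with the discrete requirement $d(\ell_i,\ell_j)=2$ and the rigidity of the mutually touching configuration, one should derive a contradiction. A likely implementation would perturb the configuration to a generic one, use that the touching equations are then transverse, and conclude that each axis is an isolated point of $V_{\ell^*} \cap \bigcap_{k\neq i} V_{\ell_k}$ whose connected component in $V_{\ell^*}$ cannot be shared. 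Once this ``one-per-component'' principle is established, multiplying by $d$ from the chart selection yields $|C|\leq 4d\cdot 7^{2d-3}$.
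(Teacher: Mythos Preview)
Your proposal has a genuine gap at exactly the step you flag as the ``main obstacle.'' There is no reason that two distinct cylinders $\ell_i,\ell_j\in C$ must lie in different connected components of the single hypersurface $V_{\ell^*}=\{F(\ell^*,\cdot)=0\}$. That hypersurface has codimension one in $\RR^{2d-2}$, so its components are $(2d-3)$-dimensional; a path inside $V_{\ell^*}$ from $\ell_i$ to $\ell_j$ only maintains distance $2$ from $\ell^*$, and nothing forces it to respect $d(\ell_i,\ell_j)=2$ or tangency with the other members of $C$. Your suggested fixes (perturb to generic, use transversality, show each axis is isolated in $\bigcap_{k\neq i}V_{\ell_k}$) do not yield the needed statement either: even if $\ell_i$ were an isolated point of the full intersection, that says nothing about which component of the single hypersurface $V_{\ell^*}$ it sits in.

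The paper's proof supplies the missing idea. Rather than a single reference hypersurface, it works with the \emph{restricted Zariski closure}
\[
\overline{X}^F=\bigcap_{X\subset Z(F_x)}Z(F_x)\ \cap\ \bigcap_{X\subset Z(F^y)}Z(F^y),
\]
the intersection of \emph{all} touching hypersurfaces (in both slots) that contain $X$. The key lemma is that $F$ then vanishes on \emph{every} pair of points of $\overline{X}^F$, not just on pairs involving the original configuration. Milnor--Thom still bounds the number of components of $\overline{X}^F$ by $4\cdot 7^{2d-3}$ (same degree, same ambient dimension). Now the component argument goes through cleanly: if a component $U$ contains two points of $X$, then for any $x,y\in U$ with distinct direction parts one has $F(x,y)=0$ and non-parallelism, hence $\|x-y\|\ge\operatorname{dist}(L_x,L_y)=2$; so the projection of $U$ to the direction coordinates is simultaneously connected and $2$-separated, a contradiction. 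Two further differences: the paper avoids your $d$-chart pigeonhole by a single rotation placing all axes in one chart, and the factor $d$ in $4d\cdot7^{2d-3}$ instead comes from the elementary bound of $d$ on mutually touching \emph{parallel} unit cylinders (equivalently, touching unit spheres in $\RR^{d-1}$), combined multiplicatively with the distinct-direction bound.
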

\begin{thm}\label{unitDistanceCylindersLines}
Let $C$ be a set of mutually touching cylinders (with arbitrary radii) in $\RR^d$. Then $|C|\leq 20(d+1)\cdot 7^{2d-2}$.
\end{thm}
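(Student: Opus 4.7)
The plan is to adapt the proof of Theorem~\ref{unitDistanceLines} to accommodate the additional degree of freedom given by the radius. A cylinder in $\RR^d$ of arbitrary radius has $2d-1$ parameters: $2d-2$ for its axis (a line in $\RR^d$) and one for its radius $r>0$. The tangency condition between two cylinders --- that the distance between their axes equals $r_i + r_j$ (external) or $|r_i - r_j|$ (internal) --- becomes, after squaring, a polynomial equation of bounded degree in the joint parameters, supplemented by sign conditions that distinguish the type of tangency and ensure non-degeneracy (e.g., skewness of axes). This gives a semi-algebraic touching relation of controlled algebraic complexity, exactly as advertised in the abstract.

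I would then carry out an iterative algebraic reduction. Fix a reference cylinder $C_1$; every other cylinder must lie on the hypersurface $H_1 \subset \RR^{2d-1}$ cut out by the tangency condition with $C_1$. Fixing a second cylinder $C_2$, the remaining cylinders lie on $H_1 \cap H_2$, which generically has codimension $2$. Continuing, after fixing $2d-1$ cylinders, the remaining ones lie in a zero-dimensional semi-algebraic set. Applying a Milnor--Thom--Warren style estimate to this set bounds the number of points by $O(d) \cdot 7^{2d-2}$; adding in the $2d-1$ initial cylinders and being generous with constants gives $|C| \le 20(d+1)\cdot 7^{2d-2}$. The appearance of the base $7 = 2D-1$ in the exponent is consistent with the natural degree $D=4$ of the squared-distance-between-lines equation, while the extra factor of $7$ compared to Theorem~\ref{unitDistanceLines} reflects exactly the extra parameter (the radius) present here.

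The hardest part I expect is controlling degenerate configurations in which the iterative intersection $H_1 \cap \cdots \cap H_k$ fails to have the expected codimension --- for instance, when many cylinders share parallel or even coincident axis directions, or when the internal vs.\ external tangency pattern coordinates to drop the rank of the constraint system. These configurations must be handled by stratifying the parameter space according to which sign conditions hold and which algebraic relations become degenerate, bounding the count on each stratum, and taking the maximum. A related technical point is that the touching relation is genuinely semi-algebraic (a polynomial equality together with sign constraints), so the Bezout-style bound must be replaced by a sign-pattern version of Milnor--Thom, such as those of Basu--Pollack--Roy; doing the bookkeeping carefully enough that the constant in front stays at $20(d+1)$ rather than blowing up with the number of sign conditions is where most of the effort will go.
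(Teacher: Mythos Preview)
Your proposal has a real gap at exactly the spot you flag as ``the hardest part,'' and the paper gets around it by a different mechanism rather than by stratification. There is no reason the intersection $H_1\cap\cdots\cap H_{2d-1}$ should be zero-dimensional; Milnor--Thom only bounds the number of connected components, and on a positive-dimensional component you have no argument separating distinct cylinders. The vague plan to stratify by sign patterns and degeneracies does not close this: on each stratum you would still need a reason why at most boundedly many cylinders can share a component, and nothing in your outline supplies one. (Your numerics are also back-fitted: the scheme you describe would give roughly $4\cdot 7^{2d-2}+(2d-1)$, not $20(d+1)\cdot 7^{2d-2}$.)

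The paper never tries to reach dimension zero. It forms the \emph{restricted Zariski closure} $\overline Y^G$ (the intersection of all slices $Z(G_w)$ and $Z(G^v)$ containing $Y$) and proves the key Lemma~\ref{HClosureLem}: $G$ vanishes on \emph{every} pair from $\overline Y^G$, not just on pairs from $Y$. After bounding the connected components of $\overline Y^G\cap\{r\ge 1\}$ via Basu--Pollack--Roy (Corollary~\ref{consequenceOfBPR}) and pigeonholing two cylinders of $Y$ with distinct directions into one component $U$, the decisive step is geometric: for any $(x,r),(y,s)\in U$ with non-parallel axes, $G\big((x,r),(y,s)\big)=0$ forces $\|(x,r)-(y,s)\|\ge\operatorname{dist}(L_x,L_y)=r+s\ge 2$. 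Hence the projection of $U$ to the axis-direction coordinates is simultaneously countable (one preimage per direction gives a $2$-separated set in $\RR^{2d-1}$) and, being connected with at least two points, uncountable---a contradiction. This countable/uncountable tension, enabled by Lemma~\ref{HClosureLem}, is the missing idea; it replaces the zero-dimensionality you were hoping for and works regardless of $\dim\overline Y^G$.
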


Theorem \ref{unitDistanceLines} is equivalent to the problem of bounding the maximum number of lines in $\mathbb{R}^d$ with pairwise unit distances. It sounds similar to the classical 2-distance set estimates for points by Larman, Rogers, and Seidel \cite{LarmanRogersSeidel1977} and by Blukhuis \cite{Blokhuis1988}, but there are significant differences. There is no single polynomial vanishing on the parameters of two lines if and only if their distance is one. We have to restrict our approach to real spaces, where cylinder-touching can be expressed as a combination of polynomial equalities and non-equalities. As far as we know, this is a novel application of the polynomial method.


\section{An algebraic expression for touching cylinders}\label{algExpressionForTangencySection}
In what follows, we will identify points in $\RR^{2d-2}$ with lines in $\RR^d$. For $x = (a,b)\in\RR^{d-1}\times\RR^{d-1}$, we define the line $L_{x}(t) = \{(t, at+b)\colon t\in\RR\}$. Every line that is not parallel to the $\{x_1=0\}$ plane can be expressed in this form. A pair of lines corresponding to the points $x=(a,b)$ and $y=(c,d)$ are parallel if and only if $a=c$. 

Let $x=(a,b)$ and $y=(c,d)$. We wish to compute the squared distance between the corresponding lines $L_x$ and $L_{y}$. This will be a rational function of $x$ and $y$. It can be expressed as the ratio of the Gram determinant of the displacement and directions to the Gram determinant of the directions.

Define
\[
P(x,y) = \det \begin{pmatrix}
            |b-d|^2 		& 	(b-d)\cdot a  	& 	(b-d)\cdot c  \\
             a \cdot (b-d) & 	1 + \|a\|^2 	& 	1 +  a\cdot c   \\
            c \cdot (b-d) & 	1 + c\cdot a 	& 	1 + \|c\|^2
        \end{pmatrix},\quad
        Q(x,y) = \det \begin{pmatrix}
            1 + \|a\|^2 & 1 + a\cdot c \\
            1 +  c \cdot a & 1 + \|c\|^2
        \end{pmatrix}.
\]
Then 
\begin{equation}
    (\operatorname{dist}(L_{x}, L_{y}))^2 = P(x,y) / Q(x,y).
\end{equation}
For $x$ fixed, $P(x, y)$ and $Q(x, y)$ are polynomials of degrees 4 and 2, respectively, in $y$. Similarly with the roles of $x$ and $y$ reversed. Observe that $Q(x,y)=0$ if and only if $a=c$, i.e.~the lines $L_x$ and $L_y$ are parallel.
For $x,y\in\RR^{2d-2}$, define 
\begin{equation}\label{defnF}
F(x,y) = P(x,y) - 4Q(x,y).
\end{equation}
Similarly, for $w=(x,r),\ v=(y, s) \in \RR^{2d-2}\times \RR$, define 
\begin{equation}\label{defnG}
G(w,v)  = P(x,y) - (r+s)^2Q(x,y).
\end{equation} 
We record several observations. In what follows, we adopt the notation $w = (x,r)$ and $v = (y,s)$.
\begin{itemize}
	\item If the unit cylinders with coaxial lines $L_x$ and $L_y$ touch, then $F(x,y) = 0$. If the cylinder with coaxial line $L_x$ and radius $r$ touches the cylinder with coaxial line $y$ and radius $s$, then $G(w,v) = 0$.
	\item Suppose the lines $L_x$ and $L_{y}$ are not parallel. If $F(x,y)=0$, then the unit cylinders with coaxial lines $L_x$ and $L_y$ touch. If $G(w,v)=0$ and $r,s>0$, then the cylinder with coaxial line $L_x$ and radius $r$ touches the cylinder with coaxial line $L_y$ and radius $s$.
	\item For $x$ (resp.~$y$) fixed, $F(x,y)$ is a polynomial in $y$ (resp.~$x$) of degree 4. For $w$ (resp.~$v$) fixed, $G(w,v)$ is a polynomial in $v$ (resp.~$w$) of degree 4. 
\end{itemize}

\section{The restricted Zariski closure}
Let $H(x,y)\colon \RR^{k}\times\RR^k\to\RR$ be a real polynomial. Define $H_x(y) = H(x,y)$ and define $H^y(x) = H(x,y)$. Thus $Z(H_x)=\{y\in \RR^k\colon H(x,y) = 0\}$ and similarly for $Z(H^y)$. 

For $X\subset\RR^k$ and $H$ as above, define
\begin{equation}\label{defnXStar}
\overline X^H = \bigcap_{\substack{x\in\RR^k\\ X\subset Z(H_x)}}\!\!\!Z(H_x)\ \ \cap\bigcap_{\substack{y\in\RR^k\\ X\subset Z(H^x)}}\!\!\!Z(H^y),
\end{equation}
where by convention we define $\overline X^H=\RR^k$ if there are no sets in the above intersection. Note that $\overline X$ is closed (in both the Euclidean and Zariski topology) and that $X\subset \overline X\subset \overline X^H$, where $\overline X$ denotes the Zariski closure of $X$. We call $\overline X^H$ the (symmetric) restricted Zariski closure of $X$ with respect to $H$. 

\begin{lem}\label{HClosureLem}
Let $H(x,y)\colon \RR^{k}\times\RR^k\to\RR$ be a real polynomial, let $X\subset\RR^k$ be non-empty, and suppose $H(x,y)=0$ for all $x,y\in X$. Then $H(x,y)=0$ for all $x,y\in \overline X^H$.
\end{lem}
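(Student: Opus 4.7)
The plan is to unfold the definition of the symmetric restricted closure and perform a short two-step bootstrap. The definition of $\overline X^H$ is built precisely to promote $X\times X$-vanishing of $H$ to $\overline X^H \times \overline X^H$-vanishing, by separately closing in each slot; my argument will carry out these two closings in sequence.

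First, I would use the hypothesis to observe that for every $x_0 \in X$ the slice $H_{x_0}$ vanishes on $X$, so $x_0$ satisfies the condition attached to the first intersection in \eqref{defnXStar}. Hence $Z(H_{x_0})$ appears in that intersection, giving $\overline X^H \subset Z(H_{x_0})$. Rephrased, this says $H(x_0,y)=0$ for every $x_0 \in X$ and every $y \in \overline X^H$.

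Second, I would fix an arbitrary $y \in \overline X^H$ and apply Step 1 in reverse: the conclusion just obtained shows that $H^y$ vanishes on $X$, which is precisely the condition for $y$ to contribute to the second intersection in \eqref{defnXStar}. Therefore $\overline X^H \subset Z(H^y)$, meaning $H(x,y)=0$ for every $x \in \overline X^H$. Since $y \in \overline X^H$ was arbitrary, this gives the conclusion.

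I do not expect any real obstacle here; the lemma is essentially a definitional statement, and the argument is purely a matter of threading the definition twice. The one conceptual point worth flagging is why both intersections in \eqref{defnXStar} are needed: after Step 1 we only know $H$ vanishes on $X \times \overline X^H$, and Step 2 uses the second intersection (the closure in the $x$-slot) to extend this to $\overline X^H \times \overline X^H$. A one-sided restricted closure would not suffice for this bootstrap.
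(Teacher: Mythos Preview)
Your proposal is correct and follows essentially the same two-step bootstrap as the paper's proof: first use the first intersection in \eqref{defnXStar} to pass from vanishing on $X\times X$ to vanishing on $X\times \overline X^H$, then use the second intersection to pass to $\overline X^H \times \overline X^H$. Your additional commentary on why both intersections are needed is accurate and matches the logic of the argument.
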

\begin{proof} 
To streamline notation, define $X^* = \overline X^H$. Let $x\in X$. Since $H(x,y)=0$ for all $y\in X$, we have $X\subset Z(H_x)$, and hence $X^*\subset Z(H_x)$. This means that for all $x\in X$ and all $y^*\in X^*$, we have $H(x,y^*) = 0$. Fix $y^*\in X^*$. Then for all $x\in X$ we have $H(x,y^*) = 0$, i.e. $X\subset Z(H^{y^*})$, and thus $X^*\subset Z(H^{y^*})$. Since $y^*\in X^*$ was arbitrary, we conclude that $H(x^*,y^*)=0$ for all $x^*,y^*\in X^*$.
\end{proof}
\begin{rem}
For some applications, we might only know that $H(x,y) = 0$ for \emph{distinct} $x,y\in X$. In this case, we can apply Lemma \ref{HClosureLem} to the polynomial $\tilde H(x,y) = H(x,y)\Vert x-y\Vert^2$. 
\end{rem}

\section{Milnor-Thom and its consequences}
The Milnor-Thom theorem \cite{Milnor1964, Thom1965} bounds the sum of the Betti numbers of a real algebraic variety. The following formulation is Theorem 11.5.3 from \cite{BCR}.
\begin{thm}\label{milnorThomThm}
Let $Z \subset\RR^k$ be a real algebraic variety defined by polynomials of degree at most $D$. Then the sum of the Betti numbers of $Z$ is at most $D(2D-1)^{k-1}$.
\end{thm}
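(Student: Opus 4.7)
The plan is to prove the bound by Morse theory on a smooth compact approximation of $Z$, combined with a B\'ezout-style count of critical points.

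First, I would reduce to the case of a single defining polynomial: if $Z = V(f_1, \ldots, f_s)$ with $\deg f_i \leq D$, then $Z = V(g)$ for $g = f_1^2 + \cdots + f_s^2$, a single polynomial of degree at most $2D$. For a generic small $\eta > 0$, Sard's theorem ensures that the hypersurface $M_\eta = \{g = \eta\}$ is smooth and (after intersecting with a large ball, if necessary) compact; and for $\eta$ sufficiently small, the Lojasiewicz inequality implies that the sublevel set $S_\eta = \{g \leq \eta\}$ is a tubular neighborhood of $Z$ that deformation retracts onto $Z$. In particular $\sum b_i(Z) = \sum b_i(S_\eta)$.

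Next I would apply Morse theory on $S_\eta$ using the linear function $\ell(x) = x_1$, after a generic rotation of coordinates. Since $\nabla \ell = e_1$ is nonzero everywhere, $\ell$ has no interior critical points on $S_\eta$; every critical point of $\ell|_{S_\eta}$ is a boundary critical point, lying on $M_\eta$ at a point where $\nabla g$ is proportional to $e_1$. Equivalently, they are the real solutions of
\begin{equation*}
g(x) = \eta, \qquad \frac{\partial g}{\partial x_2}(x) = \cdots = \frac{\partial g}{\partial x_k}(x) = 0,
\end{equation*}
a system of $k$ polynomial equations in $k$ unknowns with degrees $2D, 2D - 1, \ldots, 2D - 1$. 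By B\'ezout's theorem, the number of isolated real solutions is at most $2D(2D-1)^{k-1}$, and Morse theory for manifolds with boundary bounds $\sum b_i(S_\eta)$ by the number of boundary critical points of one of the two types (depending on whether $\nabla g$ points in the direction of increasing or of decreasing $\ell$). Since these two types partition the critical set, at most half of the B\'ezout count contributes, yielding the claimed bound $D(2D-1)^{k-1}$.

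The most delicate step will be justifying the final factor-of-two saving in a way that is uniform in the perturbation, and controlling the transition from the possibly singular, possibly noncompact variety $Z$ to the smooth compact level set $M_\eta$. Real varieties can exhibit wild singularities, the Betti numbers can jump under perturbation, and one must combine the Lojasiewicz inequality (to handle the limit $\eta \to 0$), Sard's theorem (to select a regular value), and Morse theory with boundary, all in a semialgebraic setting. Once this halving argument — a classical ingredient going back to Milnor and Thom — is properly in place, the proof concludes.
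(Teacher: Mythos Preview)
The paper does not prove this theorem: it is quoted as Theorem~11.5.3 of \cite{BCR}, with the original arguments attributed to Milnor~\cite{Milnor1964} and Thom~\cite{Thom1965}. So there is no in-paper proof to compare against; the relevant comparison is with the classical proof in those references.

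Your sketch is precisely the classical Milnor--Thom argument: replace the defining polynomials by the single polynomial $g=\sum f_i^2$ of degree $\le 2D$, pass to a regular sublevel set $S_\eta$ that deformation retracts onto $Z$, and bound $\sum b_i(S_\eta)$ via Morse theory for a generic linear height function, using B\'ezout to count the at most $2D(2D-1)^{k-1}$ boundary critical points and halving because only one boundary type contributes. This is exactly Milnor's proof, and the delicate points you flag (noncompactness, the deformation retraction of $S_\eta$ onto $Z$, genericity) are the same ones handled carefully in \cite{BCR}. One minor correction: for singular $Z$ the set $S_\eta$ is not literally a tubular neighborhood, but the deformation retraction still holds by semialgebraic local triviality (Hardt's theorem) or, as you say, a \L{}ojasiewicz-type argument; and the phrase ``after intersecting with a large ball, if necessary'' hides real work, since cutting by a ball introduces new boundary that must be shown not to create extra critical points---Milnor deals with this by adding $\varepsilon\|x\|^2$ rather than by truncation. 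With those caveats, your outline is correct and matches the cited proof.
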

Note that Theorem \ref{milnorThomThm} does not (a priori) require that $Z$ be defined by a \emph{finite} set of polynomials, though by Hilbert's basis theorem we can always reduce to this case.
Recall that the zeroth Betti number $b_0(Z)$ counts the number of Euclidean connected components of $Z$. We will record what happens when we apply Theorem \ref{milnorThomThm} to the set $\bar X^F$ defined in \eqref{defnXStar}, when $F\colon\RR^{2d-2}\times\RR^{2d-2}$ is the function from \eqref{defnF}.

%
\begin{cor}\label{consequenceOfMilnorThom}
Let $X\subset\RR^{2d-2}$. Then $\overline X^F$ is a union of at most $4\cdot 7^{2d-3}$ Euclidean connected components.
\end{cor}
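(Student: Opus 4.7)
The plan is a direct application of the Milnor--Thom theorem (Theorem \ref{milnorThomThm}) to the set $\overline X^F \subset \RR^{2d-2}$. By construction \eqref{defnXStar}, $\overline X^F$ is the intersection of the real algebraic varieties $Z(F_x)$ for certain $x \in \RR^{2d-2}$ together with varieties $Z(F^y)$ for certain $y \in \RR^{2d-2}$. Each $Z(F_x)$ is cut out by a single polynomial in the variable $y \in \RR^{2d-2}$, and the third bullet point in Section \ref{algExpressionForTangencySection} records that this polynomial has degree $4$; the symmetric statement holds for $Z(F^y)$.

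Hence $\overline X^F$ is a real algebraic variety in $\RR^{k}$ with $k = 2d-2$ defined by a (possibly infinite) family of polynomials, each of degree at most $D = 4$. This is exactly the hypothesis of Theorem \ref{milnorThomThm}; the theorem does not require the family to be finite, as noted immediately after its statement. Applying the bound $D(2D-1)^{k-1}$ with $D = 4$ and $k-1 = 2d-3$ gives
\[
b_0(\overline X^F) \;\leq\; \sum_{i} b_i(\overline X^F) \;\leq\; 4\cdot 7^{2d-3},
\]
and since $b_0$ counts the number of Euclidean connected components, the corollary follows.

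There is essentially no serious obstacle here: the content lies in having defined $\overline X^F$ as a Zariski-closed set cut out by the slice polynomials $F_x$ and $F^y$, which individually have bounded degree $4$ even though $F$ itself has total degree $8$. If any step requires care, it is only the observation that Milnor--Thom applies to intersections of arbitrarily many such varieties, which is handled either by the remark after Theorem \ref{milnorThomThm} or, if preferred, by invoking Hilbert's basis theorem to reduce to a finite defining family before applying the bound.
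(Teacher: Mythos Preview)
Your proof is correct and matches the paper's approach exactly: the corollary is stated as an immediate application of Theorem \ref{milnorThomThm} with $k=2d-2$ and $D=4$, using that each slice polynomial $F_x$ or $F^y$ has degree $4$. Your remark about the possibly infinite defining family being handled by Hilbert's basis theorem is precisely the observation the paper makes after stating Theorem \ref{milnorThomThm}.
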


A similar result holds for $Y\subset\RR^{2d-1}$ and $\overline Y^{G}$. However, even if (for example) all of the cylinders in $Y$ have radius $\geq 1$, the set $\overline Y^{G}$ may contain points corresponding to cylinders with zero, or negative, radii. In order to restrict attention to cylinders with radius at least one, we will intersect the set  $\overline Y^{G}$ with a half-space of the form $\{(x,r)\in\RR^{2d-1}\times\RR\colon r \geq 1\}$. The following result from \cite{BPR} allows us to control the number of Euclidean connected components that results from this process.

\begin{thm}
Let $Z \subset\RR^k$ be a real algebraic variety defined by polynomials of degree at most $D$. Let $P\colon\RR^k\to\RR$ be a polynomial of degree at most $D$. Then the sum of the number of connected components of the three sets $Z\cap\{P<0\}$, $Z\cap\{P=0\}$ and $Z\cap\{P>0\}$ is at most $5D(2D-1)^{k-1}$.
\end{thm}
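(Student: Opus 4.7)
The plan is to bound each of $b_0(Z\cap\{P=0\})$, $b_0(Z\cap\{P>0\})$, and $b_0(Z\cap\{P<0\})$ separately and sum. The equality case is immediate: $Z\cap\{P=0\}$ is a real algebraic variety defined by polynomials of degree at most $D$, so Theorem \ref{milnorThomThm} gives $b_0(Z\cap\{P=0\})\leq D(2D-1)^{k-1}$. By the symmetry $P\mapsto -P$ it suffices to show $b_0(Z\cap\{P>0\})\leq 2D(2D-1)^{k-1}$, and for this I partition the connected components $C$ of $Z\cap\{P>0\}$ according to whether $\alpha_C:=\inf_{z\in C}P(z)$ is positive or zero.

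If $\alpha_C>0$, then $\overline C\subset\RR^k$ cannot contain a zero of $P$, since any limit point $z$ of $C$ satisfies $P(z)\geq\alpha_C>0$ by continuity. So $\overline C$ is a connected subset of the closed set $Z$ contained in $\{P\neq 0\}$, and since $\{P>0\}$ and $\{P<0\}$ are disjoint open sets, $\overline C\subset\{P>0\}$. Maximality of $C$ as a connected component of $Z\cap\{P>0\}$ then forces $\overline C=C$, so $C$ is both open and closed in $Z$ and hence is a connected component of $Z$ itself. The number of such components is therefore at most $b_0(Z)\leq D(2D-1)^{k-1}$, by Theorem \ref{milnorThomThm} applied to $Z$.

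If $\alpha_C=0$, then $P(C)$ is a connected subset of $(0,\infty)$ with infimum $0$, so it contains an interval $(0,\beta_C)$ with $\beta_C>0$. Given any finite family $C_1,\dots,C_N$ of such components, set $\epsilon:=\tfrac12\min_i\beta_{C_i}>0$; then each $C_i\cap\{P=\epsilon\}$ is nonempty and lies in the pairwise disjoint set $C_i$, so the $C_i$ contain $N$ distinct connected components of the real algebraic variety $Z\cap\{P=\epsilon\}$, which is defined by the polynomials defining $Z$ together with $P-\epsilon$ and so has degree bound $D$. Theorem \ref{milnorThomThm} therefore forces $N\leq D(2D-1)^{k-1}$. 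Combining the two cases gives $b_0(Z\cap\{P>0\})\leq 2D(2D-1)^{k-1}$, and summing over the three sign conditions produces the claimed bound $5D(2D-1)^{k-1}$, the factor $5$ coming from $1+2+2$.

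The main conceptual step is the case $\alpha_C>0$: one has to show that a component of $Z\cap\{P>0\}$ whose $P$-values stay bounded away from zero is in fact a connected component of the full variety $Z$. This hinges on $Z$ being closed in $\RR^k$ together with the observation that the continuous image of a connected set under $P$ is an interval in $\RR$. Once this dichotomy is in place, no passage to infinitesimals or use of higher-degree auxiliary polynomials (such as $P^2-\epsilon^2$ or $P(P^2-\epsilon^2)$) is required: every invocation of Theorem \ref{milnorThomThm} is to a variety whose defining polynomials still have degree at most $D$, which is what allows the bound to carry $(2D-1)^{k-1}$ rather than a larger exponential factor.
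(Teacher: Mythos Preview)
The paper does not give its own proof of this statement; it is quoted from Basu--Pollack--Roy \cite{BPR} and used as a black box, so there is no in-paper argument to compare against. Your argument is essentially correct and pleasantly elementary, reducing everything to three applications of the Milnor--Thom bound (Theorem~\ref{milnorThomThm}) for varieties cut out by polynomials of degree at most $D$.

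One step deserves an extra sentence of justification. In the case $\alpha_C>0$ you deduce from $\overline C=C$ that ``$C$ is both open and closed in $Z$,'' but $\overline C=C$ only yields closedness. To get openness you need that connected components of the semi-algebraic set $Z\cap\{P>0\}$ are open in it---equivalently, that semi-algebraic sets are locally connected (or, what suffices here, have only finitely many connected components). This is a standard fact, but without it you cannot conclude that $C$ is an entire connected component of $Z$ rather than a proper closed connected subset of one, and the injection from your ``$\alpha_C>0$'' components into the components of $Z$ would not be justified. Once that line is added, the proof goes through and yields exactly the stated constant $5=1+2+2$.
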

\begin{cor}\label{consequenceOfBPR}
Let $Y\subset\RR^{2d-1}$. Then $\overline Y^G \cap \{(x,r)\in\RR^{2d-1}\times\RR\colon r \geq 1\}$ is a union of at most $20\cdot 7^{2d-2}$ Euclidean connected components.
\end{cor}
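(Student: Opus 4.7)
The plan is to apply the theorem from \cite{BPR} (stated immediately before the corollary) directly, taking the algebraic variety $Z = \overline Y^G$ and the separating polynomial $P(x,r) = r-1$. The ambient space is $\RR^k$ with $k = 2d-1$, and the degree bound we need to feed in is $D = 4$.

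First I would verify that $\overline Y^G$ is indeed a real algebraic variety defined by polynomials of degree at most $4$. By the definition \eqref{defnXStar}, $\overline Y^G$ is the intersection of zero sets $Z(G_w)$ and $Z(G^v)$ over all $w,v$ for which the corresponding containments hold. By the third bullet at the end of Section 2, for any fixed $w$ (resp.\ $v$), $G_w$ (resp.\ $G^v$) is a polynomial in the free variable of degree at most $4$. So every polynomial in the defining family has degree at most $4$, and Theorem~\ref{milnorThomThm} (and the subsequent theorem from \cite{BPR}) applies without needing to first reduce to a finite defining set.

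Next I would apply the theorem from \cite{BPR} with $P(x,r) = r-1$, which has degree $1 \le D$. The conclusion is that
\[
b_0\bigl(\overline Y^G \cap \{P<0\}\bigr) + b_0\bigl(\overline Y^G \cap \{P=0\}\bigr) + b_0\bigl(\overline Y^G \cap \{P>0\}\bigr) \leq 5D(2D-1)^{k-1} = 5\cdot 4\cdot 7^{2d-2} = 20\cdot 7^{2d-2}.
\]

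Finally I would pass from these three pieces to the half-space intersection. Writing $\overline Y^G \cap \{r\geq 1\} = \bigl(\overline Y^G\cap\{r=1\}\bigr) \cup \bigl(\overline Y^G\cap\{r>1\}\bigr)$, every connected component of the left-hand side either meets $\{r>1\}$, in which case it contains a connected component of $\overline Y^G\cap\{r>1\}$, or is contained in $\{r=1\}$, in which case it is a connected component of $\overline Y^G\cap\{r=1\}$. Hence the number of connected components of $\overline Y^G \cap \{r\geq 1\}$ is at most the sum $b_0(\overline Y^G\cap\{r>1\}) + b_0(\overline Y^G\cap\{r=1\})$, which is bounded by the displayed quantity $20\cdot 7^{2d-2}$. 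There is no real obstacle here; the only mildly delicate point is the elementary comparison between components of a closed half-space intersection and components of its interior/boundary decomposition, but this is a standard and short argument.
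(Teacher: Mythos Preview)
Your proof is correct and is exactly the argument the paper has in mind: the corollary is stated without proof precisely because it follows by applying the \cite{BPR} theorem with $Z=\overline Y^G$, $k=2d-1$, $D=4$, and $P(x,r)=r-1$, together with the elementary observation you make about components of the closed half-space versus its boundary/interior pieces.
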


\section{Bounding the number of pairwise touching cylinders}
We are now ready to prove Theorems \ref{unitDistanceLines} and \ref{unitDistanceCylindersLines}.
\begin{proof}[Proof of Theorem \ref{unitDistanceLines}]
Since at most $d$ unit spheres in $\RR^{d-1}$ can mutually touch, at most $d$ pairwise parallel unit cylinders in $\RR^d$ can mutually touch. Thus it suffices to prove that every set of mutually touching unit cylinders with distinct directions has cardinality at most $4\cdot 7^{2d-3}$. 

Suppose to the contrary that there existed a set $C$ of  mutually touching unit cylinders with distinct directions that has cardinality $4\cdot 7^{2d-3}+1$. Applying a rotation if needed, we can suppose that none of these cylinders are parallel to the $\{x_1=0\}$ plane. Let $X\subset\RR^{2d-2}$ denote the corresponding set of points, as described in Section \ref{algExpressionForTangencySection}. Let $F$ be as defined in \eqref{defnF}. By Corollary \ref{consequenceOfMilnorThom}, there exists a Euclidean connected component $U\subset \bar X^F$ that contains at least two points of $X$.

Let $\pi\colon\RR^{2d-2}\to\RR^{d-1}$ be the projection to the first $d+1$ coordinates (this corresponds to the direction of the line). We claim that if $x,y\in U$ with $\pi(x)\neq\pi(y)$, then $\Vert x-y\Vert \geq 2$. This is because $x,y\in U \subset\overline X^F$ implies (by Lemma \ref{HClosureLem}) that $F(x,y) = 0$, while $\pi(x)\neq \pi(y)$ implies that $L_x$ and $L_y$ are not parallel. This means that $\Vert x-y\Vert \geq \operatorname{dist}(L_x, L_y) = 2$. In particular, $\pi(U)$ is (at most) countable, since every $\geq 2$ separated subset of $\RR^k$ is countable. On the other hand, $\pi(U)$ is connected and contains at least two points, and thus is uncountable. This contradiction completes the proof.
\end{proof}

\begin{proof}[Proof of Theorem \ref{unitDistanceCylindersLines}]
The proof proceeds in a similar fashion to that of Theorem \ref{unitDistanceLines}. Since at most $d+1$ circles in $\RR^d$ can mutually touch, it suffices to prove that every set of mutually touching cylinders with distinct directions has cardinality at most $20\cdot 7^{2d-2}$. Suppose for contradiction that there exists such a set of cylinders with cardinality $20\cdot 7^{2d-2}+1$. After rescaling we may suppose that each cylinder has radius $\geq 1$. Let $Y\subset\RR^{2d-1}$ be the corresponding set of points (the final coordinate denotes the radius of the cylinder). We wish to obtain a contradiction. 

Arguing as above and using Corollary \ref{consequenceOfBPR} in place of Corollary \ref{consequenceOfMilnorThom}, we obtain a Euclidean connected set $U\subset \bar Y^G \cap \{(x,r)\in\RR^{2d-1}\times\RR\colon r \geq 1\}$ that contains at least two points of $Y$, and thus $\pi(U)$ is uncountable. 
On the other hand, if $(x,r),\ (y,s)\in U$ and $\pi(x,r)\neq\pi(y,s)$, then $\Vert x-y\Vert \geq \operatorname{dist}(L_x, L_y) \geq (r+s)\geq 2$. This means that $\pi(U)$ must be (at most) countable. This is our desired contradiction.
\end{proof}

\section{Acknowledgements}
The first author's research was supported by an NSERC Discovery grant and by the National Research Development and Innovation Office
of Hungary, NKFIH, Grants no. KKP133819 and Excellence 151341. The second author's research was supported by a NSERC Discovery grant, the Nankai Zhide Foundation, and the Fundamental Research Funds for the Central Universities (Project No. 100-63253272).

\end{document}